\newtheorem{theorem}{Theorem}[section]
\newtheorem{proposition}{Proposition}[section]
\theoremstyle{definition}
\newtheorem{example}[theorem]{Example}
\newtheorem{remark}[section]{Remark}
\newcommand{\Z}{\mathbb{Z}}
\newcommand{\Q}{\mathbb{Q}}
\newcommand{\threematr}[9]{\Bigl( \begin{smallmatrix}{#1}&{#2}&{#3}\\{#4}&{#5}&{#6}\\{#7}&{#8}&{#9}\end{smallmatrix} \Bigr) }
\newcommand{\threevect}[3]{\Bigl( \begin{smallmatrix}{#1}\\{#2}\\{#3}\end{smallmatrix}\Bigr) }
\begin{document}

\title[A cubic generalization of Brahmagupta's identity]{A cubic generalization of Brahmagupta's identity}

\author[Samuel A. Hambleton]{Samuel A. Hambleton}

\address{School of Mathematics and Physics, The University of Queensland, St. Lucia, Queensland, Australia 4072}

\email{sah@maths.uq.edu.au}

\subjclass[2010]{Primary 11R16, 11D57; Secondary 11D25, 11G05}

\date{Submitted to J. R. M. S. on 27 May, 2016.}

\keywords{Brahmagupta's identity, cubic fields, elliptic curves}

\begin{abstract}
We give an algebraic identity for cubic polynomials which generalizes Brahmagupta's identity and facilitates arithmetic in cubic fields. We also pose a question about a relationship between the elements of a cubic field of fixed trace and fixed norm and rational points of an elliptic curve. 
\end{abstract}

\maketitle

\section{Introduction}

Brahmagupta's identity is an ancient Indian algebraic identity with several applications. The identity is expressed as  
\begin{equation}\label{Brahm}
\left( x_1 x_2 + D y_1 y_2 \right)^2 - D \left( x_1 y_2 + x_2 y_1 \right)^2 = \left( x_1^2 - D y_1^2 \right) \left( x_2^2 - D y_2^2 \right) . 
\end{equation}
When $D = -1$, we obtain a well known result on {\em Pythagorean triples}, triples of positive integers $(x,y,z)$ corresponding to the lengths of the sides of a right triangle so that $x^2 + y^2 = z^2$. By \eqref{Brahm}, if $(x_1, y_1, z_1)$ and $(x_2, y_2, z_2)$ are Pythagorean triples, then so is $(x_3, y_3, z_3)$, where 
\begin{align*}
x_3 & = \left| x_1 x_2 - y_1 y_2 \right| , & y_3 & = x_1 y_2 + x_2 y_1 , & z_3 & = z_1 z_2 
\end{align*}
and $| \cdot |$ denotes the absolute value. 

Similarly, if $(x_1, y_1)$ and $(x_2, y_2)$ satisfy the Pell equation 
\begin{equation}\label{Pell}
x^2 - D y^2 = 1 , 
\end{equation}
where $D$ is an integer, then by \eqref{Brahm} so does $(x_3, y_3)$, where  
\begin{align}\label{grplaw}
x_3 & = x_1 x_2 + D y_1 y_2 , & y_3 & = x_1 y_2 + x_2 y_1 . 
\end{align}
This shows that if the Diophantine equation \eqref{Pell} has a solution other than $(\pm 1, 0)$, then more can be obtained. When $D > 0$ is not a square, it is known that \eqref{Pell} has infinitely many solutions.

Possibly the first known method for solving \eqref{Pell}, the chakravala method due to Bhaskara II, uses \eqref{Brahm}. See \cite{Duttaone,Duttatwo,Duttathree} for a description and history of the chakravala method. See \cite{JacWill} for other methods of solving \eqref{Pell}.  

Consider the matrix given by
\begin{equation}\label{Brahmatrix}
M_j = \left(
\begin{array}{cc}
 x_j & D y_j \\
 y_j & x_j \\
\end{array}
\right) . 
\end{equation}
The transpose of the matrix is called the Brahmagupta matrix; see \cite{Suryanarayan}. Brahmagupta's identity \eqref{Brahm} is equivalent to the matrix identity $M_1 M_2 = M_3$. Taking the determinants of the $M_j$, for $j = 1, 2, 3$, gives \eqref{Brahm}, and the left column of $M_3$ is the vector with entries given by \eqref{grplaw}. 

The purpose of this article is to introduce an analogous $3 \times 3$ matrix such that matrix multiplication gives a similar identity to \eqref{Brahm} for a ternary cubic form and gives \eqref{Brahm} when we set specific integer values to the letters in this $3 \times 3$ matrix. Before we introduce this matrix we will consider Brahmagupta's identity in the context of the quadratic field $K = \Q (\sqrt{D})$, where $D$ is the discriminant of $K$. 

Let $\mathcal{O}_K = \Z [\omega ]$, where $\omega = \frac{s + \sqrt{D}}{2}$ is the ring of integers of $K$, and $s$, equal to $0$ or $1$, is the remainder of $D$ modulo $4$. Then for every $\alpha \in \mathcal{O}_K$, there are integers $u, y$ such that $\alpha = u + y \omega $. We can express $\alpha$ as $\alpha = \frac{x + y \sqrt{D}}{2}$, where $x = 2 u + s y$ is the trace of $\alpha $. Taking the norm of $\alpha $ gives 
\begin{equation}\label{intquadr}
N( \alpha ) = \left( \frac{x + y \sqrt{D}}{2} \right) \left(  \frac{x - y \sqrt{D}}{2} \right) = \frac{1}{4}\left( x^2 - D y^2 \right) .
\end{equation}
We know that the norms of algebraic numbers are multiplicative and consequently if $\alpha , \beta \in K$, then $$N(\alpha ) N(\beta ) = N(\alpha \beta ) .$$ When we write $\alpha , \beta $ in the form of \eqref{intquadr}, and multiply by $4$, we obtain \eqref{Brahm}. In fact, there is a bijection $\phi$ between the ring $\mathcal{O}_K$ and the integer solutions $(x, y, n)$ to the equation $x^2 - D y^2 = 4 n$, where $D$ is the discriminant of a quadratic field, given by
\begin{align*}
\phi & : \mathcal{O}_K \longrightarrow \left\{ (x, y, n) \in \Z^3 \ : \ x^2 - D y^2 = 4 n \right\} , & \phi & : \alpha = \frac{x + y \sqrt{D}}{2} \longmapsto \left( x, y, N(\alpha ) \right) ;
\end{align*}  
see \cite{Lemm}. As a consequence of this bijection we are able to carry out basic arithmetic in $\mathcal{O}_K$ and $K$ using $2 \times 2$ matrices with integer and rational entries respectively. If instead of \eqref{Brahmatrix} we use 
\begin{equation}\label{Brahmtwo}
N_j = \left(
\begin{array}{cc}
 u_j & m y_j \\
 y_j & u_j + s y \\
\end{array}
\right), 
\end{equation} 
where $m = \frac{D - s}{4} $, we have the convenient correspondence $\alpha_j = u_j + y_j \omega \longleftrightarrow N_j$ with several useful properties. The trace of $\alpha_j$ is the trace of $N_j$, the norm of $\alpha_j$ is the determinant of $N_j$, $\alpha_j \in \mathcal{O}_K$ if and only if $N_j$ has integer entries, $\alpha_j \in K$ if any only if $N_j$ has rational entries, $\alpha_j^{-1}$ can be calculated by finding the inverse of $N_j$ and considering the entries in the left column, and the eigenvalues of $N_j$ are $\frac{x_j \pm y_j \sqrt{D}}{2}$.    

Gauss \cite[Art. 234]{Gauss} generalized Brahmagupta's identity for the composition of binary quadratic forms, given by $Q(x, y) = A x^2 + B x y + C y^2$, where $A, B, C \in \Z$ satisfying $\gcd (A, B, C) = 1$, $B^2 - 4 A C = D$ is the discriminant of a quadratic field, and $x, y$ are indeterminants. The binary quadratic form $Q(x, y)$ is abbreviated $Q = (A, B, C)$. Gauss' composition law on the $\text{GL}_2(\Z )$ classes of binary quadratic forms is known to be equivalent to the ideal class group of the quadratic field $K$. The way that Gauss introduced composition is known as a bilinear transformation,
\begin{equation}\label{Gauss}
Q_3(u_3 , y_3) = Q_1(u_1 , y_1) Q_2(u_2 , y_2) ,  
\end{equation}
where $Q_j (u_j, y_j) = (A_j , B_j, C_j)$, for $j = 1, 2, 3$. Since we can always find $\beta_j \in \Z$ such that $B_j = 2 \beta_j + s$, and $C_j = \frac{\beta_j^2 + s \beta_j - m}{A_j}$, we have 
\begin{eqnarray*}
u_3 & = & e u_1 u_2 + \frac{e}{A_2} \left( \beta_2 - \beta_3 \right) u_1 y_2 + \frac{e}{A_1} \left( \beta_1 - \beta_3 \right) u_2 y_1 + \frac{e}{A_1 A_2} \left( \beta^{\times } - \beta_3 \beta^{+} \right) y_1 y_2 , \\
y_3 & = & \frac{A_1}{e} u_1 y_2 + \frac{A_2}{e} u_2 y_1 + \frac{\beta^{+}}{e} y_1 y_2 , \\
\beta^{+} & = & \beta_1 + \beta_2 + s , \\
\beta^{\times } & = & \beta_1 \beta_2 + m , \\
e & = & \gcd \left( A_1 , A_2 , \beta^{+} \right) , \\
A_3 & = & \frac{A_1 A_2}{e^2} , 
\end{eqnarray*}
and $\beta_3$ is the least non-negative integer satisfying $\beta_3 \equiv b_3 \pmod{A_3}$, where $p,q,r$ are integers satisfying $e = A_1 p + A_2 q + \beta^{+} r$, and $b_3 = \frac{1}{e} \left( A_1 \beta_2 p + A_2 \beta_1 q + \beta^{\times r} \right)$. It is relatively easy to show that the coefficients of $u_1 u_2$, $u_1 y_2$, $u_2 y_1 $, and $y_1 y_2$ in $u_3 $ and $y_3$ are integers. When $A_1 = A_2 = 1$, $B_1 = B_2 = s$, and $C_1 = C_2 = - m$, we have $A_3 = 1$, $B_3 = s$, and $C_3 = -m$. Letting $u = \frac{x - s y}{2}$ and multiplying \eqref{Gauss} by $4$ again gives \eqref{Brahm}. Thus Gauss composition may be considered as a quadratic generalization of Brahmagupta's identity. 

\section{A cubic identity}

Now we develop the analogous situation, for cubic fields and $3 \times 3$ matrices, of the relationship between elements of quadratic fields and \eqref{Brahmtwo}. A binary cubic form $\mathcal{C}$ is a form given by 
\begin{equation}\label{bcf}
\mathcal{C}(x, y) = a x^3 + b x^2 y + c x y^2 + d y^3 ,
\end{equation}
where $a,b,c,d \in \Z$, and $\mathcal{C}$ is irreducible over $\Q [x, y]$. Belabas \cite{Belabas} showed that there is a fast algorithm for compiling tables of binary cubic forms $\mathcal{C}$ whose discriminant 
\begin{equation}\label{disc}
\Delta = b^2 c^2 + 18 a b c d - 4 a c^3 - 4 b^3 d - 27 a^2 d^2
\end{equation}
is equal to that of the field $K = \Q (\zeta )$, where $\mathcal{C}(\zeta , 1) = 0$, and $\mathcal{C}$ belongs to the image of the Davenport-Heilbronn map \cite{DavHeil}. This map gives a bijection between the conjugacy class of the cubic field $K$ and the $\text{GL}_2(\Z )$ class of the binary cubic forms of the same discriminant as the field $K$ belonging to the image of such a map, where the conjugacy class of the cubic field $L = \Q ( \alpha )$ is mapped to the $\text{GL}_2(\Z )$ class of the binary cubic form $$\mathcal{C}_{L} = \frac{1}{\text{disc}(L)} N \left( (\omega_2 - \omega_2' ) x - (\omega_3 - \omega_3' ) y \right) ,$$ $\{ 1, \omega_2 , \omega_3 \}$ is an integral basis of $L$, $\text{disc}(L)$ is the discriminant of $L$, and $N$ is the norm. It was also shown \cite{Belabas} that we may easily give the integral basis of the ring or integers $\mathcal{O}_K$ of $K$ in terms of the coefficients $a, b$ of $\mathcal{C}$ of \eqref{bcf} as $$B = \{ 1, a \zeta , a \zeta^2 + b \zeta \} ,$$ so that all $\alpha \in \mathcal{O}_K$ be be uniquely represented as 
\begin{equation}\label{alphacubic}
\alpha_j = u_j + x_j \left( a \zeta \right) + y_j \left( a \zeta^2 + b \zeta \right) , 
\end{equation}
where $u_j, x_j, y_j \in \Z$. We we call the binary cubic form $\mathcal{C}$ belonging to the image of the Davenport-Heilbronn map of $K$ the {\em canonical binary cubic form of the cubic field $K$}.

Consider the matrix 
\begin{equation}\label{Samatrix}
N_j = \threematr{u_j}{- a d y_j}{- a d x_j - b d y_j}{x_j}{u_j - b x_j - c y_j}{- c x_j - d y_j}{y_j}{a x_j}{u_j - c y_j} .
\end{equation}
The matrix given by \eqref{Samatrix} corresponds to $\alpha_j \in K$ satisfying \eqref{alphacubic}, when $\mathcal{C} = (a, b, c, d)$ is the canonical binary cubic form of the cubic field $K$. Under the correspondence between $\alpha_j$ and $N_j$, the trace of $\alpha_j$ is equal to the trace of $N_j$, the norm of $\alpha_j$ is equal to the determinant of $N_j$, the product $\alpha_1 \alpha_2 = \alpha_2 \alpha_1$ of elements of $K$ corresponds to the matrix product $N_1 N_2 = N_2 N_1$, the sum $\alpha_1 + \alpha_2$ corresponds to the sum $N_1 + N_2$, and the multiplicative inverse $\alpha_j^{-1}$ corresponds to the matrix inverse $N_j^{-1}$. Thus the correspondence between $\alpha_j $ and $N_j$ facilitates performing arithmetic in the cubic field $K$ since we can do this using matrices. It is easy to show that these claims are true once we consider the following result, which shows that multiplication of matrices of the form \eqref{Samatrix}, whether or not $(a,b,c,d)$ is a binary cubic form, gives a matrix of the same form. We also note that we can choose $a,b,c,d$ so that this generalizes Brahmagupta's identity, Gauss' bilinear transformation when the binary quadratic forms satisfy $Q_1 = Q_2 = (1, B, C)$, and some known identities on cubic polynomials. 

\begin{proposition}\label{propSam}
Let $N_j$ be given by \eqref{Samatrix}, where $a,b,c,d$ are fixed indeterminants and $u_j, x_j, y_j$ are indeterminants for $j = 1, 2, 3$. Then the matrix product $N_3 = N_1 N_2$ is commutative and of the form \eqref{Samatrix}. We obtain Brahmagupta's identity by choosing $a = 0$, $b = 1$, $c = 0$, $d = - D$, and taking the determinants of the $N_j$. Choosing $a = 0$, $b = C$, $c = -B$, and $d = 1$ gives the special case of Gauss' bilinear transformation for equal principal forms $Q_1 = Q_2 = (1, B, C)$. 
\end{proposition}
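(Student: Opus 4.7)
The plan is to identify $N_j$ with the matrix of multiplication by
\[
\alpha_j = u_j \cdot 1 + x_j \cdot (a\zeta) + y_j \cdot (a\zeta^2 + b\zeta)
\]
in the commutative $\Z[a,b,c,d]$-algebra $R = \Z[a,b,c,d,\zeta]/(a\zeta^3 + b\zeta^2 + c\zeta + d)$, with respect to the ordered basis $\{1,\ a\zeta,\ a\zeta^2 + b\zeta\}$. First I would verify by direct expansion, reducing $a\zeta^3$ to $-b\zeta^2 - c\zeta - d$ every time it appears, that the products $\alpha_j \cdot 1$, $\alpha_j \cdot (a\zeta)$, and $\alpha_j \cdot (a\zeta^2 + b\zeta)$ have coordinates in this basis equal to columns one, two, and three of $N_j$ in \eqref{Samatrix}. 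Only the second and third products invoke the cubic relation; the third is the longest because $a^2\zeta^4 = \zeta \cdot a^2 \zeta^3$ then requires the reduction twice.

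Once this identification is in place, commutativity of multiplication in $R$ delivers $N_1 N_2 = N_2 N_1$ for free, and the common product represents multiplication by $\alpha_1 \alpha_2$. Because $R$ is a free $\Z[a,b,c,d]$-module on the chosen basis, $\alpha_1 \alpha_2$ admits a unique expansion $u_3 + x_3 (a\zeta) + y_3 (a\zeta^2 + b\zeta)$ with $u_3, x_3, y_3 \in \Z[a,b,c,d,u_i,x_i,y_i]$, and the columns of $N_1 N_2$ are then forced to fit the template \eqref{Samatrix} with exactly these values. The assertion becomes a polynomial identity in all the letters, so it holds regardless of whether $a\zeta^3 + b\zeta^2 + c\zeta + d$ actually cuts out a cubic field.

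For the Brahmagupta specialization $(a,b,c,d) = (0,1,0,-D)$, the middle column of $N_j$ collapses to three entries with only $u_j - x_j$ nonzero, so cofactor expansion along it yields $\det N_j = (u_j - x_j)(u_j^2 - D y_j^2)$; multiplicativity of the determinant in $N_3 = N_1 N_2$, combined with reading $u_3, y_3$ off the matrix product, recovers \eqref{Brahm} from the $u_j^2 - D y_j^2$ factor. The case $(a,b,c,d) = (0,C,-B,1)$ is entirely analogous: the middle column again has a single nonzero entry, and cofactor expansion yields $\det N_j = (u_j - Cx_j + By_j)(u_j^2 + B u_j y_j + C y_j^2)$, exhibiting the principal form $(1,B,C)$ as the nontrivial factor and recovering the specialization of \eqref{Gauss}. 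The main obstacle throughout is the bookkeeping in the opening column expansions, where $a\zeta^3$ and $a^2\zeta^4$ must be reduced modulo the defining cubic without error; everything else flows from the commutativity and associativity of $R$.
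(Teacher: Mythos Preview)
Your argument is correct in spirit and genuinely different from the paper's, but one technical claim needs repair. As written, $R=\Z[a,b,c,d,\zeta]/(a\zeta^3+b\zeta^2+c\zeta+d)$ is \emph{not} a free $\Z[a,b,c,d]$-module on $\{1,\,a\zeta,\,a\zeta^2+b\zeta\}$: the element $\zeta$ lies in $R$ but not in the span, since $a$ is a non-unit. The clean fix is either to work instead in the subring $R_0=\Z[a,b,c,d][\rho,\omega]$ with $\rho=a\zeta$, $\omega=a\zeta^2+b\zeta$ (your multiplication table already shows $R_0$ is closed, and linear independence of $1,\rho,\omega$ in $R$ is an easy degree argument in $\zeta$), or to pass to the fraction field $\Q(a,b,c,d)$, over which the cubic is irreducible and $\{1,\rho,\omega\}$ is a genuine basis; the desired equalities of polynomials then descend from the fraction field back to $\Z[a,b,c,d,u_i,x_i,y_i]$. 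Either patch makes the rest of your proof go through unchanged.

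By contrast, the paper's proof is purely computational: it writes $N_j=SU_j^T$ for an explicit $3\times 6$ matrix $S$ depending only on $a,b,c,d$ and a $3\times 6$ matrix $U_j$ depending only on $u_j,x_j,y_j$, defines $u_3,x_3,y_3$ by closed formulas, and verifies the $3\times 3$ matrix identity $S\bigl(U_3^T-U_1^TSU_2^T\bigr)=0$ by displaying the intermediate $6\times 3$ matrix and multiplying. Commutativity is then read off from the symmetry of the formulas for $u_3,x_3,y_3$. Your route explains \emph{why} the identity holds (commutativity of a ring) and avoids the large intermediate display; the paper's route is self-contained and never needs to discuss freeness or localization. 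For the two specializations you proceed as the paper does, via cofactor expansion along the middle column and cancellation; note that the cancellation of the linear factor requires the auxiliary identity $u_3-x_3=(u_1-x_1)(u_2-x_2)$ (resp.\ $u_3-Cx_3+By_3=(u_1-Cx_1+By_1)(u_2-Cx_2+By_2)$), which you should state explicitly rather than leave implicit.
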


\begin{proof}
We begin by factorizing the matrix $N_j$ as the product $N_j = S U_j^T$, where $T$ denotes the matrix transpose and 
\begin{align*}
S & = \left(
\begin{array}{cccccc}
 1 & 0 & 0 & 0 & -a d & -b d \\
 0 & 1 & 0 & -b & -c & -d \\
 0 & 0 & 1 & a & 0 & -c \\
\end{array}
\right) , & U_j & = \left(
\begin{array}{cccccc}
 u_j & x_j & y_j & 0 & 0 & 0 \\
 0 & u_j & 0 & x_j & y_j & 0 \\
 0 & 0 & u_j & 0 & x_j & y_j \\
\end{array}
\right) .
\end{align*}
Let 
\begin{eqnarray*}
u_3 & = & u_1 u_2 - a d x_2 y_1 - a d x_1 y_2 - b d y_1 y_2 , \\
x_3 & = & u_1 x_2 + u_2 x_1 - b x_2 x_1 - c x_1 y_2 - c x_2 y_1 - d y_1 y_2 , \\
y_3 & = & u_1 y_2 + u_2 y_1 + a x_1 x_2 - c y_1 y_2 .
\end{eqnarray*}
We must show that $S \left( U_3^T - U_1^T S U_2^T \right) = [0]_{3 \times 3}$, the $3 \times 3$ matrix with zero entries. Expanding $U_3^T - U_1^T S U_2^T $ gives 
\tiny
\begin{equation*}
\left(
\begin{array}{ccc}
 -a d x_2 y_1-b d y_1 y_2 -a d x_1 y_2 & a d u_1 y_2 & a d u_1 x_2 + b d u_1 y_2 \\
 -b x_1 x_2 - c x_2 y_1 - c x_1 y_2 - d y_1 y_2 & b u_1 x_2 - a d x_2 y_1 + c u_1 y_2 - b d y_1 y_2 & c u_1 x_2 + d u_1 y_2 + a d x_1 x_2 + b d x_1 y_2 \\
 a x_1 x_2-c y_1 y_2 & -a u_1 x_2 + a d y_1 y_2 & c u_1 y_2 - a d x_1 y_2 \\
 -x_1 x_2 & u_1 x_2 - c x_2 y_1 - d y_1 y_2 & c x_1 x_2 + d x_1 y_2 \\
 -x_1 y_2 - x_2 y_1 & u_1 y_2 + b x_2 y_1 & u_1 x_2 - b x_1 x_2 \\
 -y_1 y_2 & -a x_2 y_1 & u_1 y_2 + a x_1 x_2 \\
\end{array}
\right) .
\end{equation*}
\normalsize
Multiplying on the left by $S$ produces $[0]_{3 \times 3}$. Since swapping $u_1$ and $u_2$, $x_1$ and $x_2$, $y_1$ and $y_2$ does not change $u_3$, $x_3$, $y_3$, it follows that matrix multiplication of the $N_j$ is commutative. 

To show that matrix multiplication generalizes Brahmagupta's identity, let $a = 0$, $b = 1$, $c = 0$, $d = - D$. Then for $j = 1, 2, 3$, 
\begin{equation}\label{SamBrahm}
N_j = \threematr{u_j}{0}{ D y_j}{x_j}{u_j - x_j }{ D y_j}{y_j}{0}{u_j } ,
\end{equation}   
and expanding the matrix $N_3 = N_1 N_2 $ gives \small $$N_3 = \left(
\begin{array}{ccc}
 u_1 u_2 + D y_1 y_2 & 0 & D (u_1 y_2 + u_2 y_1 ) \\
 u_2 x_1-x_2 x_1+u_1 x_2+D y_1 y_2 & u_1 u_2 - u_2 x_1 - u_1 x_2 + x_1 x_2 & D (u_1 y_2 + u_2 y_1 ) \\
 u_2 y_1+u_1 y_2 & 0 & u_1 u_2+D y_1 y_2 \\
\end{array}
\right) .$$ \normalsize
The determinants of $N_1$ and $N_2$ are $(u_1 - x_1) \left( u_1^2-D y_1^2 \right)$ and $(u_2 - x_2) \left( u_2^2-D y_2^2 \right)$. The determinant of $N_3$ is the product of the determinants of $N_1$ and $N_2$. Assuming $u_1 \not= x_1$ and $u_2 \not= x_2$, we obtain Brahmagupta's identity.  

To prove the claim about the special case of Gauss' bilinear transformation, let $a = 0$, $b = C$, $c = -B$, and $d = 1$. The determinant of $N_j$ is equal to $$ \left( u_j - C x_j + B y_j \right) \left( u_j^2 + B u_j y_j + C y_j^2 \right) .$$ Letting 
\begin{eqnarray*}
u_3 & = & \left(  u_1 u_2 - C y_1 y_2 \right) , \\
x_3 & = & \left(  u_1 x_2 + u_2 x_1 - C x_2 x_1 + B x_1 y_2 + B x_2 y_1 - y_1 y_2 \right) , \\
y_3 & = & \left(  u_1 y_2 + u_2 y_1 + B y_1 y_2 \right) .
\end{eqnarray*}
We have $$u_3 - C x_3 + B y_3 =  \left( u_1 - C x_1 + B y_1 \right) \left( u_2 - C x_2 + B y_2 \right) .$$ Canceling this from the determinant of $N_3 = N_1 N_2$ gives $$Q (u_3, y_3)  = Q (u_1, y_1) Q (u_2, y_2) .$$
\end{proof}

Other choices of $a$, $b$, $c$, and $d$ give identities on well known forms. For example, choosing $a = 1$, $b = 0$, $c = 0$, and $d = - n$, and taking the determinant of $N_j$ gives the form $u^3 + n x^3 + n^2 y^3 - 3 n u x y$ studied by Lagrange and Carmichael \cite{Carmichael}. Consequently, it it easy to show that if there is one solution $(u, x, y) \not= (1, 0, 0)$ to the Diophantine equation $$u^3 + n x^3 + n^2 y^3 - 3 n u x y = 1,$$ then more may be generated via matrix multiplication of the $N_j$. 
 
\begin{proposition}
Let $K = \Q(\zeta )$ be a cubic field, where $\zeta$ is a root of the polynomial $\mathcal{C}(x, 1)$, $\mathcal{C} = (a, b, c, d)$ is the canonical binary cubic form of the cubic field $K$, and let $\mathcal{O}_K $ be the ring of integers of $K$ generated by $\{ 1 , \rho , \omega \}$, where $\rho = a \zeta $, $\omega = a \zeta^2 + b \zeta $. Let 
\begin{align*}
M_{\Z } & = \{ N_j = [a_{ij}]_{3 \times 3} \ : \ a_{ij} \in \Z \} . & M_{\Q } & = \{ N_j = [a_{ij}]_{3 \times 3} \ : \ a_{ij} \in \Q \} . 
\end{align*}
There is a ring isomorphism $\phi$ and a field isomorphism $\psi$ given by
\begin{align*}
\phi & : \mathcal{O}_K \longrightarrow M_{\Z } , & \phi & : \alpha_j \longmapsto N_j , \\
\psi & : K \longrightarrow M_{\Q } , & \psi & : \alpha_j \longmapsto N_j ,
\end{align*}
where $\alpha_j = u_j + x_j \rho + y_j \omega $ and $N_j$ is defined by \eqref{Samatrix}. The trace of $\alpha_j$ is equal to the trace of $N_j$ and the norm of $\alpha_j$ is equal to the determinant of $N_j$.
\end{proposition}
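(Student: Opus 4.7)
The plan is to recognise $N_j$ as the matrix of the $\Q$-linear multiplication-by-$\alpha_j$ map on $K$ with respect to the ordered basis $\{1,\rho,\omega\}$. Once this identification is made, everything follows from standard facts about the regular representation: additivity is built into \eqref{Samatrix}, multiplicativity is the content of Proposition \ref{propSam}, and the trace/norm statements are the classical identifications of field trace and field norm with linear-algebra trace and determinant of the multiplication map.

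The first step, which is the only substantive computation, is to work out the multiplication table of the basis $\{1,\rho,\omega\}$. Using $a\zeta^3+b\zeta^2+c\zeta+d=0$ and the definitions $\rho=a\zeta$, $\omega=a\zeta^2+b\zeta$, a direct calculation gives
\begin{align*}
\rho^2 &= -b\rho + a\omega,\\
\rho\omega &= -ad - c\rho,\\
\omega^2 &= -bd - d\rho - c\omega.
\end{align*}
Expanding $\alpha_j\cdot 1$, $\alpha_j\cdot\rho$, $\alpha_j\cdot\omega$ with $\alpha_j = u_j+x_j\rho+y_j\omega$ and recording the coefficients in the basis $\{1,\rho,\omega\}$ as columns reproduces exactly \eqref{Samatrix}. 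This is the key verification and I would do it in detail.

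With this in hand, the rest of the proof is assembly. The maps $\phi$ and $\psi$ are well defined since $(u_j,x_j,y_j)$ determines $\alpha_j$ and $N_j$ uniquely; they are additive because \eqref{Samatrix} is linear in $(u_j,x_j,y_j)$; and they are multiplicative because Proposition \ref{propSam} shows that the matrix product $N_1N_2$ is again of the form \eqref{Samatrix} with parameters $(u_3,x_3,y_3)$ given by the same cubic multiplication formulas one obtains by expanding $\alpha_1\alpha_2$ using the table above. Injectivity is immediate since $(u_j,x_j,y_j)$ may be read off from the first column of $N_j$, and surjectivity onto $M_\Z$ and $M_\Q$ is tautological given how these sets are defined. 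The image of a unit is thus invertible in $M_\Q$, yielding a field isomorphism $\psi$; and $\phi$ restricts to a ring isomorphism onto $M_\Z$ because Belabas's theorem, cited above, guarantees $\mathcal{O}_K=\Z[1,\rho,\omega]$.

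Finally, for trace and norm: since $N_j$ represents the multiplication-by-$\alpha_j$ map on the $\Q$-vector space $K$ in a fixed basis, the characteristic polynomial of $N_j$ is the characteristic polynomial of this endomorphism, so $\operatorname{Tr}(N_j)=\operatorname{Tr}_{K/\Q}(\alpha_j)$ and $\det(N_j)=N_{K/\Q}(\alpha_j)$. The main (and only real) obstacle is the basis multiplication table; careful bookkeeping is needed to ensure the fractions arising from $\zeta^3$ and $\zeta^4$ cancel and leave integer combinations of $\{1,\rho,\omega\}$, which is precisely the statement that $\{1,\rho,\omega\}$ forms a ring basis of $\mathcal{O}_K$.
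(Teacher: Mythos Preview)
Your proposal is correct, and the route you take is genuinely different from the paper's. You identify $N_j$ as the matrix of the regular representation of $K$ (multiplication by $\alpha_j$ on $K$ in the basis $\{1,\rho,\omega\}$), verify this via the multiplication table of $\rho$ and $\omega$, and then invoke the standard facts that the regular representation is a ring homomorphism and that field trace and norm coincide with matrix trace and determinant. The paper, by contrast, argues bijectivity directly from the first column of $N_j$, appeals to Proposition~\ref{propSam} together with a transport-of-structure remark for the homomorphism property, and then establishes the trace and norm identities by an explicit symmetric-function computation in the roots $\zeta_1,\zeta_2,\zeta_3$ (summing $\zeta_i$, $\zeta_i^2$, $\zeta_i\zeta_j$, etc.). Your approach is cleaner and more conceptual: once the multiplication table is checked, nothing further needs to be computed, whereas the paper's method requires a separate and somewhat tedious verification for trace and norm. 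The paper's route, on the other hand, keeps everything at the level of elementary polynomial identities and does not presuppose the reader knows that the regular representation recovers trace and norm.
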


\begin{proof}
First we show that $\phi $ is a bijection. Let $\alpha_1 , \alpha_2 \in \mathcal{O}_K$ and assume $\phi \left( \alpha_1 \right) = \phi \left( \alpha_2 \right)$. Then the left columns of the matrices $\phi \left( \alpha_j \right)$, for $j = 1, 2$, must be equal. It follows that $u_1 = u_2$, $x_1 = x_2$, $y_1 = y_2$ and we must have $\alpha_1 = \alpha_2$. Therefore $\phi$ is injective. To show that $\phi$ is surjective, Let $N_j \in M$. We have $\phi \left( u_j + x_j \rho + y_j \omega \right) = N_j$ and $\alpha_j = u_j + x_j \rho + y_j \omega \in \mathcal{O}_K$. It follows that $\phi$ is bijective. By transport of the ring structure of $\mathcal{O}_K$ onto $M$, we obtain a ring isomorphism. Similarly $\psi $ is a field isomorphism. Under these maps $0$ and $1$ map to the zero matrix and the identity matrix respectively. 

To prove the claims about the trace and norm of $\alpha_j$, let $\tau $ be an embedding of $K$. The trace and norm are defined 
\begin{align*}
t_j & = \alpha_j + \tau \left( \alpha_j \right) + \tau  \left( \tau \left( \alpha_j \right) \right) , & n_j & = \alpha_j \tau \left( \alpha_j \right) \tau \left( \tau \left( \alpha_j \right) \right) ,
\end{align*}   
where without loss of generality we assume $\tau : (\zeta_1 , \zeta_2 , \zeta_3 ) \longmapsto (\zeta_2 , \zeta_3 , \zeta_1 ) $ and we define $\zeta_1 = \zeta$, and let $\zeta_2$, $\zeta_3 $ be the other two roots of $\mathcal{C}(x, 1)$. By considering the identities 
\small
\begin{align*}
\zeta_1 + \zeta_2 + \zeta_3 & = \frac{-b}{a} , & \zeta_1 \zeta_2 + \zeta_1 \zeta_3 + \zeta_2 \zeta_3 & = \frac{c}{a} , \\
 \zeta_1^2 + \zeta_2^2 + \zeta_3^2 & = \frac{b^2 - 2 a c}{a^2} , & \zeta_1^3 + \zeta_2^3 + \zeta_3^3 & = \frac{- b^3 + 3 a b c - 3 a^2 d}{a^3} , \\
\zeta_1^2 \zeta_2^2 + \zeta_1^2 \zeta_3^2 + \zeta_2^2 \zeta_3^2 & = \frac{c^2 - 2 b d}{a^2} , & \zeta_1 \zeta_2 \zeta_3 & = \frac{-d}{a} , \\
 &  & t_j & = 3 u_j - b x_j - 2 c y_j ,
\end{align*}
and 
\begin{eqnarray*}
\left( \zeta_2 + \zeta_3 \right) \zeta_1^2 + \left( \zeta_1 + \zeta_3 \right) \zeta_2^2 + \left( \zeta_1 + \zeta_2 \right) \zeta_3^2 = \frac{-b d + 3 a d}{a^2} ,
\end{eqnarray*}
\normalsize
it is easy although somewhat tedious to show that the traces of $\alpha_j$ and $N_j$ coincide and the norm of $\alpha_j$ coincides with the determinant of $N_j$.
\end{proof}

We note that there is a concise way to represent the determinant of $N_j$, \eqref{Samatrix}. Delone and Faddeev \cite[pp. 130]{DeloneFaddeev} considered various triangular forms, ternary cubic forms. The determinant of $N_j$ is a ternary cubic form, not a Cayley nor a Dirichlet form but if we replace $u \longrightarrow u + c y$, $x \longrightarrow - x$, and then take the determinant, we get the Dirichlet form. When we change to variables $t, x, y$, where $t$ is trace of $N_j$ and $n$ is the determinant, instead of $u, x, y$, we obtain the equation 
\begin{equation}\label{myfaveq}
t^3 - 3 t \mathcal{Q}(x, y) + \mathcal{F}(x, y) = 27 n ,
\end{equation}
where $\mathcal{Q}$ and $\mathcal{F}$ are respectively the Hessian and Jacobian covariant binary forms, of the binary cubic $\mathcal{C} = (a, b, c, d)$, defined using partial derivatives
\begin{align*}
H(\mathcal{C}) & = \frac{1}{2} \left(
\begin{array}{cc}
 \mathcal{C}_{xx} & \mathcal{C}_{xy} \\
 \mathcal{C}_{yx} & \mathcal{C}_{yy} \\
\end{array}
\right) , & J_{(\mathcal{Q}, \mathcal{C})} & = \left(
\begin{array}{cc}
 \mathcal{Q}_x & \mathcal{Q}_y \\
 \mathcal{C}_x & \mathcal{C}_y \\
\end{array}
\right) , \\
\mathcal{Q}(x, y) & = - \det ( H(\mathcal{C}) ) , & \mathcal{F}(x, y) & = - \det (J_{(\mathcal{Q}, \mathcal{C})}) ,
\end{align*}
and satisfy the syzygy 
\begin{equation}\label{syzygy}
\mathcal{F}^2 + 27 \Delta \mathcal{C}^2 = 4 \mathcal{Q}^3 ;
\end{equation}
see \cite{Hilbert}, used by Mordell \cite{Mordell} to show that the elliptic curve $y^2 = x^3 + k$ has finitely many integer solutions. Note that the Hessian binary quadratic form is simply 
\begin{equation}\label{bqf}
\mathcal{Q} = \left( b^2 - 3 a c,  b c - 9 a d,  c^2 - 3 b d \right) 
\end{equation}
and while $\mathcal{F}$ is not defined by \eqref{syzygy}, it can be obtained from  \eqref{disc}, \eqref{syzygy} and \eqref{bqf}. See \cite{Bhar} for the relationship between binary cubic forms and the ideal class group of a cubic field. 

It can be shown that when $\mathcal{C}$ is the canonical binary cubic form of a cubic field $K$ and $n = 1$, then as a Diophantine equation \eqref{myfaveq} has a group law such that the group of integer points $(t, x, y)$ is isomorphic to the group of units of $K$ of norm $1$ and this group law may be evaluated by matrix multiplication of the $N_j$. Similarly we may restrict attention to the set of all algebraic numbers of a cubic field having trace $0$. This set will form a group of trace $0$ elements under addition. It is natural to ask whether there is a group of elements of a cubic field having both trace $0$ and norm $1$. The answer is affirmative. There is a relationship between certain rational points $(x, y)$ of \eqref{myfaveq} and certain elliptic curves but the group law cannot be viewed in terms of matrix multiplication or addition of the $N_j$. It shows that rational points of some elliptic curves may be thought of as corresponding to elements of a cubic field with a specific trace and specific norm. See \cite{ManasaShankar} for a discussion of a relationship between elliptic curves and some quadratic fields.

\begin{remark}\label{elliptic}
Let $\mathcal{C} = (a, b, c, d)$ be the canonical binary cubic form of the cubic field $K$ and let $\mathcal{Q}$ and $\mathcal{F}$ the Hessian and Jacobian covariant binary forms of $\mathcal{C}$ respectively. There is a bijection between those elements of $K$ of fixed trace $t$ and fixed norm $n$ and the rational points $(x, y)$ of the elliptic curve \eqref{myfaveq} provided that the choice of fixed $t$ and $n$ make \eqref{myfaveq} a non-singular curve containing a rational point. When $t^3 = 27 n$, the curve is singular.
\end{remark}

We now discuss the transformation of \eqref{myfaveq} to Weierstrass form following Duif \cite{Duif}. In homogeneous coordinates we have 
\begin{equation}\label{homog}
\Gamma (X,Y,Z) = \mathcal{F}(X, Y) - 3 t Z \mathcal{Q} (X, Y) + \left( t^3 - 27 n \right) Z^3 = 0 , 
\end{equation}
where $x = \frac{X}{Z}$, $y = \frac{Y}{Z}$. Let $P = (P_X : P_Y : P_Z)$ be a point of \eqref{homog} in homogeneous coordinates. Since $\Gamma$ is smooth, the tangent line at $P$ is 
\begin{equation}\label{tangline}
\Gamma_X (P) (X - P_X ) + \Gamma_Y (P) (Y - P_Y ) + \Gamma_Z (P) (Z - P_Z ) = 0 .
\end{equation}
The intersections of this line with \eqref{homog} may either be of multiplicity $3$ at $P$, Case 1, or of multiplicity $2$ at $P$ and at another point $Q$ of multiplicity $1$, Case 2. 

In the first case, we map $P$ to $(0:1:0)$ and the line \eqref{tangline} to the line $Z = 0$. We fix a point $Q \not=P$ on \eqref{tangline}, not on \eqref{homog} and satisfying $\det (M) \not= 0$, where $$M = \threematr{Q_X}{P_Z}{1}{Q_Y}{P_Y}{0}{Q_Z}{P_Z}{0} .$$ We send $Q$ to $(1 : 0: 0)$. The matrix $M$ is invertible since $\det (M) \not= 0$. Letting $\threevect{U}{V}{T} = M^{-1} \threevect{X}{Y}{Z} $, we have $$k U^3 + p U^2 T + q U V T + r V^2 T + s U T^2 + w V T^2 + u T^3 = 0 ,$$ where $k, p, q, r, s, w, u \in \Q$. Dividing by $k$, which must be non-zero, and letting $T = - \frac{k W}{r}$ gives the projective Weierstrass form. We see the affine Weierstrass form when we then replace $W$ by $1$. 

Now assume that we have a point $P$ of \eqref{homog} and \eqref{tangline} intersecting \eqref{homog} with multiplicity $2$. Let $Q$ be the other intersection of \eqref{homog} and \eqref{tangline}. We take a new projective tangent line at $Q$. If this new line intersects \eqref{homog} with multiplicity $3$, we find the Weierstrass form using Case 1 renaming $Q$ as $P$. Otherwise we call the other intersection of \eqref{homog} and our new tangent line the projective point $R$. Since $P$, $Q$, $R$ cannot be collinear, the matrix $$M = \threematr{P_X}{Q_X}{R_X}{P_Y}{Q_Y}{R_Y}{P_Z}{Q_Z}{R_Z}$$ is invertible. Letting $\threevect{U^2}{V T}{U T} = M^{-1} \threevect{X}{Y}{Z} $ and dividing the resulting equation satisfied by $T, U, V$ by $U^2 T$, we obtain an equation for which we may easily obtain the Weierstrass form by then dividing this equation by the coefficient of $U^3$ and then replacing $T$ by $1$.  

\begin{example}
Let $\mathcal{C} = (1, 1, 2, 1)$, the canonical binary cubic form of a cubic field $K$ of discriminant $-23$. We fix $t = 0$ and $n = 1$. The covariant forms of $\mathcal{C}$ are $\mathcal{Q} = (-5,-7, 1)$ and $\mathcal{F} = (-11, 39, 48, 25)$. From a unit of $K$ of norm $1$ and trace $0$ we have the rational point $(-1, 1)$ of the affine elliptic curve $E_1 : \mathcal{F}(x, y) = 27 $. In homogeneous coordinates we have $P = (-1 : 1: 1)$. To find the Weierstrass form of this elliptic curve, we consider the tangent line at $P$, given by $Y = \frac{1}{2}(7 X + 9 Z )$. Substituting into $\mathcal{F}(X, Y) - 27 Z^3$ shows that there is one other intersection at $Q = (-29 : 2 : 23)$ of the tangent line with the projective version of $E_1$. The tangent line at $Q$ is $155 X - 133 Y + 207 Z = 0$ , which gives $R = (-6968 : 27569 : 22931)$ and now we have $$M = \threematr{-1}{-29}{-6968}{1}{2}{27569}{1}{23}{22931} .$$ Using this together with $\mathcal{F}(X, Y) = 27 Z^3$, gives the affine elliptic curve in Weierstrass form,
\begin{equation*}
v^2 - 88012375 v - 4056 u v = 55 u^3 + 1314066 u^2 .
\end{equation*}
\end{example}

It is clear that the $\text{GL}_2(\Z)$ class of binary cubic forms of the canonical binary cubic form $\mathcal{C}$ corresponding to $K$ will give, once we fix rational numbers $t, n$ such that \eqref{myfaveq} is non-singular, a class of isomorphic elliptic curves \eqref{myfaveq}. Remark \ref{elliptic} suggests the following question. For which elliptic curves $E$ can we find a cubic field $K$ and rational numbers $t$ and $n$ such that the canonical binary cubic form $\mathcal{C}$ has covariant forms $\mathcal{Q}$ and $\mathcal{F}$ which give an elliptic curve \eqref{myfaveq} defined over $\Q$ which is isomorphic to $E (\Q )$ ? 

The matrices presented in this article are also useful in understanding Voronoi's algorithm; see \cite{cfg}.

\end{document}